\newcommand{\A}{\mathbb{A}}
\newcommand{\CC}{\mathbb{C}}
\newcommand{\PP}{\mathbb{P}}
\newcommand{\BigFig}[1]{\parbox{12pt}{\Huge #1}}
\newcommand{\BigZero}{\BigFig{O}}
\DeclareMathOperator{\Char}{char}
\DeclareMathOperator{\ord}{ord}
\DeclareMathOperator{\Sing}{Sing}
\DeclareMathOperator{\rank}{rank}
\theoremstyle{plain}
\newtheorem{theorem}{Theorem}[section]
\newtheorem{lemma}[theorem]{Lemma}
\newtheorem{corollary}[theorem]{Corollary}
\newtheorem{proposition}[theorem]{Proposition}
\newtheorem*{main}{Theorem}
\theoremstyle{definition}
\newtheorem{definition}[theorem]{Definition}
\begin{document}

\mbox{}
\vspace{-1.1ex}
\title{Poncelet's theorem for conics in any position and any characteristic}
\author{Shin-Yao Jow}
\address{Department of Mathematics \\
National Tsing Hua University \\
Taiwan}
\email{\texttt{syjow@math.nthu.edu.tw}}
\author{Chia-Tz Liang}
\address{Department of Mathematics \\
Indiana University, Bloomington}
\email{\texttt{asctliang@gmail.com}}

\date{}

\begin{abstract}
Poncelet's theorem states that if there exists an $n$-sided polygon which is inscribed in a given conic $C$ and circumscribed about another conic $D$, then there are infinitely many such $n$-gons. Proofs of this theorem that we are aware of, including Poncelet's original proof and the celebrated modern proof by Griffiths and Harris, assume the two conics to be in general position (that is, not tangent or at least not osculating), or be defined over $\CC$, or both. Here we show that Poncelet's theorem holds for \emph{any} two conics $C$ and $D$ in the projective plane $\PP^2$ over an algebraically closed field $k$ of \emph{any} characteristic other than two. If $C$ and $D$ are osculating and $\Char k>2$, our result shows that there \emph{always} exist infinitely many polygons of $\Char k$ sides that are inscribed in $C$ and circumscribed about $D$. We also describe the situation in characteristic two in the appendix.
\end{abstract}

\keywords{}
\subjclass[2010]{14N15, 14H50}

\maketitle

\subsection*{Notation} Let $\PP^2$ be the projective plane over an algebraically closed field~$k$. Let $p,q\in \PP^2$ be two distinct points, and let $C\subseteq \PP^2$ be a conic.
\begin{itemize}
  \item We write $L(p,q)$ for the line through $p$ and $q$.
  \item If $p\in C$, we write $T_p\,C$ for the line tangent to $C$ at $p$.
  \item If $\Char k\ne 2$, we write $P_q\,C$ for the polar line \cite[4.1]{fischer} of $C$ with respect to the pole $q$.
\end{itemize}
It is well-known that if $q\in C$ then $P_q\,C=T_q\,C$, while if $q\notin C$ then $P_q\,C$ is the line through the two points of tangency of the tangents to $C$ through $q$.

\section{Introduction}
In 1813, Poncelet \cite{poncelet} discovered the following remarkable result, which is now known as Poncelet's theorem.
\begin{center}
\parbox{.95\linewidth}{\emph{Let $C$ and $D$ be two (general) conics in the real projective plane. If there exists an $n$-sided polygon which is inscribed in $C$ and circumscribed about $D$, then there are infinitely many such polygons, and every point of $C$ is a vertex of one of them.}}
\end{center}
Poncelet first proved the result for two circles. Then he deduced the general case by projectively transforming $C$ and $D$ into a pair of circles, which cannot be done if $C$ and $D$ are osculating (that is, intersecting at a point with intersection multiplicity greater than two). Thus, strictly speaking, Poncelet's argument does not apply to osculating conics. We refer the readers to the excellent survey papers \cite{C1,C2} for a modern account of Poncelet's argument, as well as later development surrounding this theorem. Here we just want to mention the most recent approach, introduced by Griffiths and Harris in \cite{GH}, where they proved Poncelet's theorem for two nowhere tangent conics $C$ and $D$ in the complex projective plane, by considering the projective curve \[
 E=\{(c,d)\in C \times D \mid c\in T_d\,D \}. \]
The assumption that $C$ and $D$ are nowhere tangent is used to show that $E$ is an elliptic curve, which is crucial in their argument.

The purpose of this paper is to prove Poncelet's theorem for \emph{any} two conics $C$ and $D$ in the projective plane $\PP^2$ over an algebraically closed field $k$ of \emph{any} characteristic other than two. Given a point $c_1\in C$, we may construct a sequence of points $(c_i,d_i)\in C\times D$ for $i=1,2,\ldots$ recursively as follows: pick $d_1\in D$ such that $L(c_1,d_1)$ is tangent to $D$, and for $i\ge 2$ let $c_i\in C$ be the point such that $L(c_{i-1},d_{i-1})\cap C=\{c_{i-1},c_i\}$, and let $d_i\in D$ be the point such that $P_{c_i}D\cap D=\{d_{i-1},d_i\}$. We refer to this construction as a \emph{Poncelet process} with initial point $c_1$, and if $(c_1,d_1)=(c_{n+1},d_{n+1})$ then we say that the process \emph{stops after $n$ steps}. Figure~\ref{subfig:typical Poncelet} shows a typical Poncelet process, while Figure~\ref{subfig:Poncelet stop} shows a Poncelet process that stops after three steps.

\begin{figure}[h]
 \centering
 \subcaptionbox{A typical Poncelet process\label{subfig:typical Poncelet}}
  {  \begin{tikzpicture}[scale=.7,line cap=round,line join=round,x=0.4cm,y=0.4cm]
    \clip(-12.,-12.) rectangle (14.,11.);
        \draw [rotate around={0.:(0.,0.)},line width=0.4pt] (0.,0.) ellipse (2.2627416997969525cm and 1.6cm);
        \draw [rotate around={8.130102354155973:(1.,-1.)},line width=0.4pt] (1.,-1.) ellipse (4.595122457735075cm and 3.6214845576891825cm);
        \draw [line width=0.4pt] (0.,8.)-- (-9.960764084704117,-4.1993947278829);
        \draw [line width=0.4pt] (-9.960764084704117,-4.1993947278829)-- (11.64372251035786,-3.770336913206851);
        \draw [line width=0.4pt] (11.64372251035786,-3.770336913206851)-- (-1.5929698944334316,7.713271495294102);
        \begin{scriptsize}
            \draw (-0.6085375110409483,2.9960827095209117) node {$D$};
            \draw [fill=black] (0.,8.) circle (2.5pt);
            \draw (0,9) node {$c_1$};
            \draw (-7.2,3.8) node {$C$};
            \draw [fill=black] (-9.960764084704117,-4.1993947278829) circle (2.0pt);
            \draw (-10.7,-4.7) node {$c_2$};
            \draw [fill=black] (11.64372251035786,-3.770336913206851) circle (2.0pt);
            \draw (12.2,-4.5) node {$c_3$};
            \draw [fill=black] (-4.898979485566357,2.) circle (2.0pt);
            \draw (-3.7,1.8) node {$d_1$};
            \draw [fill=black] (0.15881467574809882,-3.998423307928204) circle (2.0pt);
            \draw (0.2,-3) node {$d_2$};
            \draw [fill=black] (4.384878809325144,2.5271365047748526) circle (2.0pt);
            \draw (3.8029909198780083,1.5) node {$d_3$};
            \draw [fill=black] (-1.5929698944334316,7.713271495294102) circle (2.0pt);
            \draw (-2,8.5) node {$c_4$};
        \end{scriptsize}
     \end{tikzpicture} }
  \subcaptionbox{A Poncelet process that stops after three steps\label{subfig:Poncelet stop}}
   { \begin{tikzpicture}[scale=.7,line cap=round,line join=round,x=1.0cm,y=1.0cm]
\clip(-4.5,-4.5) rectangle (4.5,4.5);
\draw [line width=0.4pt] (0.,0.) circle (4.cm);
\draw [line width=0.4pt] (-2.,0.) circle (1.5cm);
\draw [line width=0.4pt] (2.0780924877172398,3.417825567885695)-- (-3.91948610191114,0.7985165602073765);
\draw [line width=0.4pt] (-3.91948610191114,0.7985165602073765)-- (-2.3522410968808103,-3.235268431234871);
\draw [line width=0.4pt] (-2.3522410968808103,-3.235268431234871)-- (2.0780924877172398,3.417825567885695);
\begin{scriptsize}
\draw (-2.5180164604723565,3.6) node {$C$};
\draw [fill=black] (2.0780924877172398,3.417825567885695) circle (2.5pt);
\draw (2.5,3.8) node {$c_1=c_4$};
\draw (-2,-1.1) node {$D$};
\draw [fill=black] (-0.7514862664242724,-0.8313924807651301) circle (2.0pt);
\draw (-1.1,-0.5) node {$d_3$};
\draw [fill=black] (-2.6003371158438604,1.3746255298590504) circle (2.0pt);
\draw (-2.2,1) node {$d_1=d_4$};
\draw [fill=black] (-3.91948610191114,0.7985165602073765) circle (2.0pt);
\draw (-4.2,1.1) node {$c_2$};
\draw [fill=black] (-3.3981766177318664,-0.5432330490939209) circle (2.0pt);
\draw (-3,-0.38367095724361827) node {$d_2$};
\draw [fill=black] (-2.3522410968808103,-3.235268431234871) circle (2.0pt);
\draw (-2.5773319416814426,-3.5) node {$c_3$};
\end{scriptsize}
    \end{tikzpicture} }
  \caption{Poncelet process}\label{fig:Poncelet}
\end{figure}
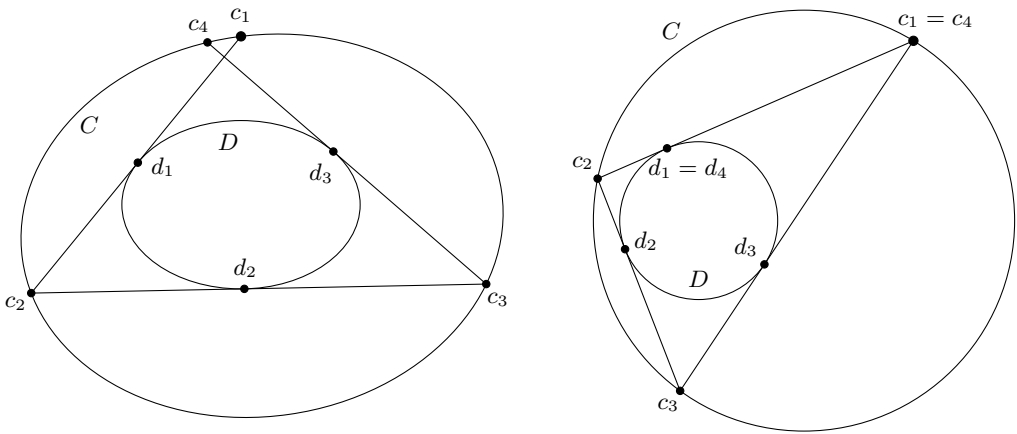

Using this terminology, our main result is the following

\begin{main}[\,$=$\,Theorem~\ref{thmm}]
Let $C$ and $D$ be two smooth conics in $\PP^2$ over an algebraically closed field $k$ with $\Char k\ne 2$. Let $T\subseteq C\cap D$ be the set of points where $C$ and $D$ are tangent. If there exist a point $c_1\in C\setminus T$ and a positive integer $n$ such that the Poncelet process with initial point $c_1$ stops after $n$ steps, then the Poncelet process with any initial point in $C$ stops after $n$ steps.

Moreover, when $C$ and $D$ are osculating,
    \begin{itemize}
        \item if $\Char k =0$, then the Poncelet process stops for no initial points in $C\setminus T$;
        \item if $\Char k >0$, then the Poncelet process stops for all initial points in $C$ after $\Char k$ steps.
    \end{itemize}
\end{main}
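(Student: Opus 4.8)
The plan is to encode the Poncelet process as the iteration of a single self-map of the Griffiths--Harris incidence curve, and then to push the argument through even when that curve degenerates. First I would reintroduce
\[
 E=\{(c,d)\in C\times D \mid c\in T_d\,D\}\subseteq C\times D
\]
together with the two involutions $\sigma_1,\sigma_2\colon E\to E$: here $\sigma_1$ fixes $d$ and interchanges the two points of $C$ on the tangent line $T_d\,D$ (so $L(c,d)\cap C=\{c,\sigma_1(c)\}$), while $\sigma_2$ fixes $c$ and interchanges the two tangency points of the tangents from $c$ to $D$ (so $P_c\,D\cap D=\{d,\sigma_2(d)\}$). By the very definition of the process, $(c_{i+1},d_{i+1})=\phi(c_i,d_i)$ where $\phi:=\sigma_2\circ\sigma_1$, whence $(c_i,d_i)=\phi^{i-1}(c_1,d_1)$ and the process with initial point $c_1$ stops after $n$ steps exactly when $\phi^n$ fixes $(c_1,d_1)$.

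Next I would exhibit the group structure that makes $\phi$ a \emph{translation}. The curve $E$ has arithmetic genus $1$ on $C\times D\cong\PP^1\times\PP^1$. When $C$ and $D$ are nowhere tangent, $E$ is smooth, hence elliptic once an origin is fixed; the projection $E\to D$ is a degree-$2$ map to $\PP^1$ whose deck transformation is $\sigma_1$, and since $\Char k\neq 2$ this (tame) involution has genus-$0$ quotient, which forces $\sigma_1(x)=v_1-x$ in the group law, and likewise $\sigma_2(x)=v_2-x$. Therefore $\phi(x)=\sigma_2\sigma_1(x)=x+(v_2-v_1)$ is translation by the fixed element $t:=v_2-v_1$, and $\phi^n=\mathrm{id}$ if and only if $nt=0$ --- a condition on $t$ alone. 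Thus closure after $n$ steps for a single starting point in $C\setminus T$ forces $nt=0$, hence $\phi^n=\mathrm{id}$ everywhere, giving closure after $n$ steps for every initial point. Initial points lying in $T$, where $E$ is singular, I would treat directly: the stationary branch $c_1=d_1=c_2=\cdots$ trivially stops after $n$ steps, and the remaining bookkeeping is folded into the degenerate analysis below.

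For the tangent cases, $E$ acquires singularities over the points of $T$ and its geometric genus drops to $0$; here I would pass to the normalization $\tilde E\cong\PP^1$ and lift $\sigma_1,\sigma_2$ to projective involutions of $\PP^1$, each (since $\Char k\neq 2$) with two fixed points. Their composite $\tilde\phi$ is a M\"obius transformation, and the same ``composite of two fixed-point involutions'' computation shows $\tilde\phi$ is a translation in the one-dimensional group $G$ given by the smooth locus of $E$: the multiplicative group when the singularity is a node (simple tangency) and the additive group $(k,+)$ when it is a cusp, which is precisely the osculating case (contact order $\geq 3$). In every case $\phi$ is translation by a fixed $t\in G$, so $\phi^n(x)=x$ is the point-independent condition on $t$, which re-establishes the first assertion uniformly. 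For the ``Moreover'' clause I would compute the additive parameter $t\in(k,+)$ explicitly in the osculating case and verify $t\neq 0$; then $\phi^n=\mathrm{id}$ holds if and only if $nt=0$, i.e.\ $n\cdot 1=0$ in $k$, i.e.\ $\Char k\mid n$. Hence in characteristic $0$ the process stops for no initial point in $C\setminus T$, while in characteristic $p>0$ the least such $n$ equals $p$, so the process stops for every initial point after exactly $\Char k$ steps.

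The main obstacle is the degenerate geometry: one must pin down the singularity type of $E$ from the contact order of $C$ and $D$ (smooth, node, or cusp), verify that the two geometric involutions lift to genuine projective involutions of $\tilde E$ with the expected fixed loci, and --- crucially --- identify the resulting group as the additive group in the osculating case while showing the translation parameter $t$ is nonzero, all uniformly in every characteristic other than $2$. A secondary point to settle is the irreducibility of $E$, so that a single group governs all starting points at once, together with the correct treatment of the points of $T$, where $E$ is singular and the process degenerates; both are needed to conclude that closure after $n$ steps propagates to literally every initial point of $C$.
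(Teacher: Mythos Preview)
Your overall strategy---encode the process as $\phi=\sigma_2\circ\sigma_1$ on the incidence curve $E$, then exploit the group structure on the smooth locus---matches the paper's approach, and your treatment of the nowhere-tangent case and of the irreducible singular cases (node $\leadsto \mathbb G_m$, cusp $\leadsto \mathbb G_a$) is essentially the paper's argument recast in slightly more structural language. The paper in fact handles the nodal case $(2,1,1)$ by the bare fixed-point count on $\PP^1$ rather than via $\mathbb G_m$, but your version is equally valid.

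There is, however, a genuine gap: you flag irreducibility of $E$ as ``a secondary point to settle'' but you do not realize that it actually \emph{fails}. The paper shows (via an explicit bidegree-$(2,2)$ equation in $\PP^1\times\PP^1$) that $E$ is reducible precisely for intersection types $(2,2)$ and $(4)$, where it splits as two smooth rational curves $E_1\cup E_2$. In these cases your normalization is two disjoint copies of $\PP^1$, your lifted $\sigma_1,\sigma_2$ \emph{swap} the components rather than restrict to involutions of a single $\PP^1$, and your ``single group $G$ governing all starting points'' picture collapses. In particular, type $(4)$ is osculating (contact order $4$), yet $E$ is not an irreducible cuspidal curve as your dichotomy ``simple tangency $\Rightarrow$ node, osculating $\Rightarrow$ cusp'' predicts; so your additive-group computation of the translation parameter $t$ does not cover this case.

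The paper's fix is worth knowing: since $\sigma_1$ and $\sigma_2$ each interchange $E_1$ and $E_2$, the composite $\phi$ preserves each component, and one uses the key lemma that the fixed locus of $\phi$ equals $\Sing(E)=E_1\cap E_2$. For type $(2,2)$ this gives $\phi|_{E_i}$ two fixed points, and a third fixed point forces $\phi^n|_{E_1}=\mathrm{id}$; one then transports this to $E_2$ via the conjugation $\phi^n|_{E_2}=\sigma_1\circ(\phi^{-n}|_{E_1})\circ\sigma_1$. For type $(4)$ there is a single intersection point, so $\phi|_{E_i}$ has a unique fixed point and is again of the form $z\mapsto z+b_i$ with $b_i\neq 0$, recovering the additive conclusion you want. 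Incorporating this reducible analysis (and the component-swapping behavior of the involutions) is what is missing from your proposal.
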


Note that a Poncelet process that starts at a tangent point of $C$ and $D$ stops after one step, which is certainly not the case for all other starting points. Thus it is necessary to exclude the tangent points of $C$ and $D$ as initial points in the assumption. The exclusion of characteristic two is because, if $\Char k=2$, then there exists a point $p\in \PP^2$ such that \emph{every} line through $p$ is tangent to $D$ (see Corollary \ref{cor: tangent of conic in char 2 }), so the Poncelet process cannot be defined if one of the point $c_i$ is equal to $p$.

Our approach is an expansion of that of Griffiths and Harris in \cite{GH}. We study how the geometric structure of the curve $E=\{(c,d)\in C \times D \mid c\in T_d\,D \}$ depends on the intersection multiplicities of $C$ and $D$ at their points of intersection. Over $\CC$ this has been done in \cite{leopold}, where complex analytic methods were used. Our approach, however, is purely algebraic, and applies in positive characteristic. As far as we know, the last part of our result, which says that the Poncelet process always stops for osculating conics in positive characteristic, seems to be new.

\subsection*{Acknowledgment} The authors gratefully acknowledge the support of MoST (Ministry of Science and Technology) in Taiwan.

\section{The algebraic curve underlying the Poncelet process}
In this section, let $C$ and $D$ be two smooth conics in $\PP^2$ over an algebraically closed field $k$ with $\Char k\ne 2$, and let \[
 E=\{(c,d)\in C \times D \mid c\in T_d\,D \}. \]
Griffiths and Harris \cite{GH} showed that $E$ is an elliptic curve if $C$ and $D$ are conics in general position over $k=\CC$, and used it to prove Poncelet's theorem under these assumptions. In this section, we will study the properties of $E$ without assuming $C$ and $D$ to be in general position nor defined over $\CC$, so that we can prove Poncelet's theorem without these assumptions later.

Our first proposition shows that $E$ is a (possibly reducible) projective curve whose singular points correspond to tangent points of $C$ and $D$.

\begin{proposition}\label{prop: E singular cond}
Let $C$ and $D$ be two smooth conics in $\PP^2$ over an algebraically closed field $k$ with $\Char k\ne 2$, and let \[
 E=\{(c,d)\in C \times D \mid c\in T_d\,D \}. \]
Then
\begin{enumerate}
  \item $E$ is a (possibly reducible) projective curve.
  \item Let $c\in C$ and $d\in D$ be points such that $p=(c,d)\in E$. Then $E$ is singular at $p$ if and only if the point $c$ and $d$ coincides and is a point at which the conics $C$ and $D$ are tangent to each other.
\end{enumerate}
\end{proposition}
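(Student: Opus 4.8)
The plan is to realize $E$ as the zero scheme of a single bihomogeneous equation on the smooth surface $C\times D\cong\PP^1\times\PP^1$ and then run the Jacobian criterion. First I would fix isomorphisms $\phi\colon\PP^1\to C$ and $\psi\colon\PP^1\to D$ and, using $\Char k\ne2$, let $M$ be an invertible symmetric $3\times 3$ matrix representing $D$, so that $T_d\,D=P_d\,D=\{x\mid d^{\mathsf{T}}Mx=0\}$ for $d\in D$ and $P_c\,D=\{x\mid c^{\mathsf{T}}Mx=0\}$ for any point $c$. Then $c\in T_d\,D\iff c^{\mathsf{T}}Md=0$, so under the identification $C\times D\cong\PP^1\times\PP^1$ we may write $E=\{(s,t)\mid F(s,t)=0\}$ with $F(s,t)=\phi(s)^{\mathsf{T}}M\psi(t)$ bihomogeneous of bidegree $(2,2)$. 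For part (1) it then suffices to note that $F\not\equiv0$: for any $d_0\in D$ the line $T_{d_0}\,D$ cannot contain the conic $C$, so $c_0^{\mathsf{T}}Md_0\ne0$ for some $c_0\in C$. Hence $E$ is cut out in the projective surface $C\times D$ by one nonzero bihomogeneous equation, so it is a nonempty closed subscheme of pure dimension one (nonempty because $E\to D$ is surjective, every $T_d\,D$ meeting $C$), that is, a possibly reducible projective curve; moreover $F$ is reduced (its restriction to a general fibre of one projection is squarefree, and $E$ has no fibre component since no line contains a conic).

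For part (2) I would fix $p=(c,d)\in E$, say $\phi(s_0)=c$ and $\psi(t_0)=d$. Since $E$ is a reduced hypersurface in the smooth surface $C\times D$, it is singular at $p$ exactly when its local equation lies in $\mathfrak{m}_p^2$; using as local coordinates the pullbacks of affine coordinates on the two factors (chosen with $s_0,t_0$ finite), this means that $s_0$ is at least a double zero of $s\mapsto F(s,t_0)$ and $t_0$ is at least a double zero of $t\mapsto F(s_0,t)$ --- both nonzero polynomials, since a line contains neither $C$ nor $D$. Now $s\mapsto F(s,t_0)=\phi(s)^{\mathsf{T}}Md$ is the pullback under $\phi$ of the linear form cutting out $T_d\,D=P_d\,D$, so its order of vanishing at $s_0$ equals the intersection multiplicity of $T_d\,D$ with $C$ at $c$; thus the first condition holds iff $T_d\,D$ meets $C$ at $c$ to order $\ge2$, i.e.\ $T_d\,D=T_c\,C$. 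Symmetrically, $t\mapsto F(s_0,t)=c^{\mathsf{T}}M\psi(t)$ is the pullback under $\psi$ of the linear form cutting out the polar line $P_c\,D$, so the second condition holds iff $P_c\,D$ meets $D$ at $d$ to order $\ge2$, i.e.\ $P_c\,D=T_d\,D$.

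It then remains to combine these. From $P_c\,D=T_d\,D=P_d\,D$ and the invertibility of $M$ (again using $\Char k\ne2$) one gets that $Mc$ and $Md$ are proportional, hence $c=d$; conversely $c=d$ forces $P_c\,D=T_c\,D=T_d\,D$. Therefore $E$ is singular at $p$ exactly when $c=d$ and $T_d\,D=T_c\,C$; and when $c=d$ the latter equality reads $T_c\,C=T_c\,D$, which is precisely the statement that $C$ and $D$ are tangent at the common point $c=d$, giving (2). The individual steps are routine; the points that need care are the passage between the vanishing of the partial derivatives of the bihomogeneous $F$ and intersection multiplicities of lines with $C$ and $D$ on $\PP^1\times\PP^1$ (so that the Jacobian criterion is applied legitimately), and the polarity step --- the one genuinely characteristic-dependent ingredient --- deducing $c=d$ from $P_c\,D=P_d\,D$.
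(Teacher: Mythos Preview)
Your argument is correct and shares the same core with the paper's proof: apply the Jacobian criterion to $E$, translate the resulting differential conditions into the equalities $T_c\,C=T_d\,D$ and $P_c\,D=T_d\,D$, and then use the invertibility of the symmetric matrix of $D$ (i.e.\ injectivity of $c\mapsto P_c\,D$, which is where $\Char k\ne 2$ enters) to deduce $c=d$. The final logical chain is identical.

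Where you differ is in the setup. The paper treats $E$ extrinsically as a complete intersection of three equations $F,G,H$ in $\PP^2\times\PP^2$, computes the $3\times 4$ Jacobian, and reads off the rank-two condition by comparing ``normal vectors'' of the lines $T_c\,C$, $T_d\,D$, $P_c\,D$, $P_d\,D$. You instead pass immediately to $C\times D\cong\PP^1\times\PP^1$ via parametrizations $\phi,\psi$, realize $E$ as the single bihomogeneous hypersurface $F(s,t)=\phi(s)^{\mathsf T}M\psi(t)=0$ of bidegree $(2,2)$, and interpret the vanishing of each partial derivative as a line meeting a conic with multiplicity $\ge 2$. This buys you a cleaner bookkeeping (one equation, a $1\times 2$ gradient) and a more conceptual reading of the Jacobian condition via intersection multiplicities; it also gives reducedness of $E$ and the bidegree for free, facts the paper only uses later. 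The paper's version, on the other hand, avoids choosing parametrizations and keeps everything in the ambient coordinates, which makes the identification of the four lines completely explicit. Both routes are short, and your care about squarefreeness of $F$ and absence of fibre components is exactly what is needed to make the Jacobian criterion on $\PP^1\times\PP^1$ legitimate.
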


\begin{proof}
Let
\begin{align*}
    &C\colon F(x,y,z)=\begin{pmatrix}x & y & z\end{pmatrix} A \begin{pmatrix}x\\y\\z\end{pmatrix},\\
    &D\colon G(x,y,z)=\begin{pmatrix}x & y & z\end{pmatrix} B \begin{pmatrix}x\\y\\z\end{pmatrix}
\end{align*}
be the quadratic forms that define the conics $C$ and $D$, respectively, where $A$ and $B$ are $3\times 3$ nondegenerate symmetric matrices. Then $E$ is the set of points \[
 ([x:y:z],[x':y':z'])\in \PP^2\times\PP^2 \]
defined by the following three equations:
        \begin{align*}
        &F(x,y,z)=\begin{pmatrix}x & y & z\end{pmatrix} A \begin{pmatrix}x\\y\\z\end{pmatrix}=0, \\
        &G(x',y',z')=\begin{pmatrix}x' & y' & z'\end{pmatrix} B \begin{pmatrix}x'\\y'\\z'\end{pmatrix}=0, \\
        &H(x,y,z,x',y',z')=\begin{pmatrix}x & y & z\end{pmatrix} B \begin{pmatrix}x'\\y'\\z'\end{pmatrix}=0.
        \end{align*}

If $c=[c_1:c_2:1]$ and $d=[d_1:d_2:1]$ are points such that $p=(c,d)\in E$, then the Jacobian matrix of the polynomials
    \begin{align*}
        &f(x,y)=F(x,y,1)=\begin{pmatrix}x & y & 1\end{pmatrix} A \begin{pmatrix}x\\y\\1\end{pmatrix},\\
        &g(x',y')=G(x',y',1)=\begin{pmatrix}x' & y' & 1\end{pmatrix} B \begin{pmatrix}x'\\y'\\1\end{pmatrix},\\
        &h(x,y,x',y')=H(x,y,1,x',y',1)=\begin{pmatrix}x & y & 1\end{pmatrix} B \begin{pmatrix}x'\\y'\\1\end{pmatrix}
    \end{align*}
    at $p$ is \[
    J=\begin{pmatrix}
                \dfrac{\partial f}{\partial x}(p) & \dfrac{\partial f}{\partial y}(p) & \dfrac{\partial f}{\partial x'}(p) & \dfrac{\partial f}{\partial y'}(p)\\
                \dfrac{\partial g}{\partial x}(p) & \dfrac{\partial g}{\partial y}(p) & \dfrac{\partial g}{\partial x'}(p) & \dfrac{\partial g}{\partial y'}(p)\\
                \dfrac{\partial h}{\partial x}(p) & \dfrac{\partial h}{\partial y}(p) & \dfrac{\partial h}{\partial x'}(p) & \dfrac{\partial h}{\partial y'}(p)
           \end{pmatrix}
          =\begin{pmatrix}
                \dfrac{\partial f}{\partial x}(p) & \dfrac{\partial f}{\partial y}(p) & 0 & 0\\
                0 & 0 & \dfrac{\partial g}{\partial x'}(p) & \dfrac{\partial g}{\partial y'}(p) \\
                \dfrac{\partial h}{\partial x}(p) & \dfrac{\partial h}{\partial y}(p) & \dfrac{\partial h}{\partial x'}(p) & \dfrac{\partial h}{\partial y'}(p)
           \end{pmatrix}. \]
    It suffices to show that $\rank(J)=3$ for general points $p$, and to find the points $p$ where $\rank(J)\le 2$ (these are the singular points of $E$). For any line $L\colon ax+by=e$ in $\A^2$, let $N(L)=[a:b]$ denote its ``normal vector''. Then we have
    \begin{align*}
        & N(T_c\,C)=\begin{bmatrix}
            \dfrac{\partial f}{\partial x}(p):\dfrac{\partial f}{\partial y}(p)
        \end{bmatrix},\\
        &N(P_d\,D)=\begin{bmatrix}
            \dfrac{\partial h}{\partial x}(p):\dfrac{\partial h}{\partial y}(p)
        \end{bmatrix}=
        \begin{bmatrix}
            \dfrac{\partial g}{\partial x'}(p):\dfrac{\partial g}{\partial y'}(p)
        \end{bmatrix}=N(T_d\,D),\\
        & N(P_c\,D)=\begin{bmatrix}
            \dfrac{\partial h}{\partial x'}(p):\dfrac{\partial h}{\partial y'}(p)
        \end{bmatrix}.
    \end{align*}
    Since $C$ and $D$ are smooth, $N(T_c\,C)$ and $N(T_d\,D)$ are nonzero, so $\rank(J)\ge 2$. Moreover,
    \begin{align*}
     \rank(J)=2 &\iff N(T_c\,C)=N(P_d\,D)=N(T_d\,D)=N(P_c\,D)\\
                &\iff T_c\,C=P_d\,D=T_d\,D=P_c\,D,
    \end{align*}
    where the second $\iff$ is because $T_c\,C$ and $T_d\,D$ have a common point $c$, $T_d\,D$ and $P_c\,D$ have a common point $d$, and $P_d\,D=T_d\,D$ since $d\in D$. Since $P_c\,D=P_d\,D$ if and only if $c=d$, it follows that $\rank(J)=2$ if and only if $c=d$ and $T_c\,C=T_d\,D$, that is, $C$ and $D$ are tangent at the point $c=d$.
\end{proof}

As observed in \cite{GH}, the Poncelet process can be viewed as performing the following two involutions $\sigma$ and $\tau$ on $E$ consecutively:

\begin{definition}\label{def: auto on E}
    For each point $(c,d)\in E\subseteq C\times D$, let $D\cap P_c\,D=\{d,d'\}$ and $C\cap T_d\,D=\{c,c'\}$, and let $\sigma$ and $\tau$ be the involutions on $E$ defined by \[
    \sigma(c,d)=(c,d')\text{ and }\tau(c,d)=(c',d).\]
\end{definition}

To study the Poncelet process for two tangent conics (in this case $E$ is singular), we will need the following result on the fixed points of the composition $\sigma \tau$.

\begin{proposition}\label{prop: singular condition for fixed point}
 A point $p\in E$ is a fixed point of the automorphism $\nu=\sigma \tau$ on $E$ if and only if $E$ is singular at $p$.
\end{proposition}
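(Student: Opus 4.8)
The plan is to reduce the equation $\nu(p)=p$ to a pair of conditions that decouple the two coordinates of $p$, and then read those conditions off by means of the polar map, at which point Proposition~\ref{prop: E singular cond} finishes the job.

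\textbf{Step 1 (decoupling).} Since $\sigma$ is an involution, $\nu(p)=\sigma(\tau(p))=p$ is equivalent to $\tau(p)=\sigma(p)$. Write $p=(c,d)$ and, as in Definition~\ref{def: auto on E}, let $D\cap P_c\,D=\{d,d'\}$ and $C\cap T_d\,D=\{c,c'\}$, so that $\tau(p)=(c',d)$ and $\sigma(p)=(c,d')$. Because $\tau$ changes only the $C$-coordinate and $\sigma$ only the $D$-coordinate, the equality $\tau(p)=\sigma(p)$ forces $c'=c$ and $d'=d$; conversely these two equalities give $\tau(p)=\sigma(p)=p$. Hence $\nu(p)=p$ if and only if $c'=c$ and $d'=d$.

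\textbf{Step 2 (geometric translation).} Next I would interpret these two equalities. Note first that $c\in C\cap T_d\,D$ (because $(c,d)\in E$) and $d\in D\cap P_c\,D$ (because $c\in T_d\,D=P_d\,D$, and polarity is symmetric). Since $C$ is a smooth conic, $T_d\,D$ meets $C$ in a length-two subscheme containing $c$, and $c'=c$ says this subscheme equals $2c$, i.e.\ $T_d\,D$ is tangent to $C$ at $c$, which for a smooth conic means $T_d\,D=T_c\,C$. Likewise $P_c\,D$ meets $D$ in a length-two subscheme containing $d$, and $d'=d$ says it equals $2d$, i.e.\ $P_c\,D$ is tangent to $D$ at $d$, that is, $P_c\,D=T_d\,D$. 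So $\nu(p)=p$ is equivalent to the chain $T_c\,C=T_d\,D=P_c\,D$.

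\textbf{Step 3 (identification with singularity).} From $d\in D$ we have $P_d\,D=T_d\,D$, so the chain of Step~2 gives $P_c\,D=P_d\,D$; since the polar map $q\mapsto P_q\,D$ is given by the invertible matrix $B$ of Proposition~\ref{prop: E singular cond}, it is injective, whence $c=d$. Then $T_c\,C=T_d\,D=T_c\,D$ says precisely that $C$ and $D$ are tangent at $c=d$, which by Proposition~\ref{prop: E singular cond}(2) is exactly the condition for $E$ to be singular at $p$. The converse is immediate: if $E$ is singular at $p$, then $c=d$ is a tangency point of $C$ and $D$, so $T_c\,C=T_c\,D=T_d\,D$ and $P_c\,D=P_d\,D=T_d\,D$, the chain of Step~2 holds, and therefore $\nu(p)=p$. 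The only point requiring a little care is Step~1 — the observation that a fixed point of the composite $\sigma\circ\tau$ is automatically a common fixed point of both $\sigma$ and $\tau$, because the two involutions act on disjoint coordinates — but this is elementary, and I do not anticipate any substantive obstacle; the rest is a direct appeal to Proposition~\ref{prop: E singular cond} together with the standard facts that the polar map of a smooth conic is a bijection and that a tangent line to a smooth conic meets it with multiplicity exactly two.
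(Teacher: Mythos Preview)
Your proof is correct and follows essentially the same approach as the paper: both reduce the fixed-point condition to the geometric statement that $c=d$ is a tangency point of $C$ and $D$, and then invoke Proposition~\ref{prop: E singular cond}. The one minor difference is your Step~1 trick of rewriting $\nu(p)=p$ as $\tau(p)=\sigma(p)$ (using that $\sigma$ is an involution), which lets you work with $P_c\,D$ rather than $P_{c'}\,D$ and then conclude $c=d$ via injectivity of the polar map; the paper instead computes $\nu(p)$ directly and obtains $c=d$ from the fact that $T_d\,D$ meets $D$ only at $d$. Both arguments are equally short and rest on the same circle of elementary facts about polars of smooth conics.
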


\begin{proof}
   Let us write $p=(c_1,d_1)$ and $\nu(p)=(c_2,d_2)$, where $c_1,c_2\in C$ and $d_1,d_2\in D$. It follows from the definition of $\nu$ that $D\cap P_{c_2}D=\{d_1,d_2\}$ and $C\cap T_{d_1}D=\{c_1,c_2\}$. So
    \begin{align*}
        c_1=c_2 &\iff T_{d_1}D \text{ is tangent to $C$ (at $c_1=c_2$)},\\
        d_1=d_2 &\iff c_2\in D.
    \end{align*}
    By Proposition~\ref{prop: E singular cond}, it is enough to show that
    \[
    \nu(p)=p\iff c_1=d_1,\text{  and }C\text{  and }D\text{ are tangent at }c_1=d_1.
    \]
    If $\nu(p)=p$, then $c_2\in T_{d_1}D$ and $c_2\in D$, so $c_2=d_1$ (since $d_1$ is the only intersection of $T_{d_1}D$ and $D$). Hence $c_1=c_2=d_1=d_2\in C\cap D$ and $T_{c_1}C=T_{d_1}D$, that is, $C$ and $D$ are tangent at the point $c_1=d_1$. The converse is obvious.
\end{proof}

We also need to understand the geometric structure of $E$, which turns out to depend on how $C$ and $D$ intersect each other.

\begin{definition}
    Let $C,D\subseteq \PP^2$ be projective plane curves. Let $p_1, \ldots, p_n$ be all the distinct intersection points of $C$ and $D$. Let $m_i=I_{p_i}(C,D)$ be the intersection multiplicity of $C$ and $D$ at $p_i$. We may assume that $m_1\ge \cdots \ge m_n$ after suitably renumbering $p_1,\ldots, p_n$. Then we say that the \emph{intersection type} of $C$ and $D$ is $(m_1,\ldots,m_n)$.
\end{definition}

By B\'ezout's theorem, $m_1+\cdots + m_n=(\deg C)(\deg D)$. So if $C$ and $D$ are conics, then there are five possible intersection types: $(1,1,1,1)$, $(2,1,1)$, $(2,2)$, $(3,1)$, and $(4)$. It was shown in \cite{GH} that if $C$ and $D$ are nowhere tangent conics, that is, of intersection type $(1,1,1,1)$, then $E$ is an elliptic curve. We include a proof here for the sake of completeness.

\begin{proposition}\label{cor: general position is elliptic curve}
  If $C$ and $D$ are nowhere tangent conics, then the curve $E$ defined in Proposition~\ref{prop: E singular cond} is an elliptic curve.
\end{proposition}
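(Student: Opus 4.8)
The plan is to realize $E$ as a double cover of $D\cong\PP^1$ and compute its genus by Riemann--Hurwitz, in the spirit of Griffiths and Harris. By Proposition~\ref{prop: E singular cond}, $E$ is a projective curve, and since $C$ and $D$ are nowhere tangent it has no singular points; so $E$ is a smooth projective curve. I would then consider the projection $\pi\colon E\to D$, $(c,d)\mapsto d$. For $d\in D$ the fibre $\pi^{-1}(d)$ is the set of $c\in C\cap T_d\,D$; as $T_d\,D$ is a line not contained in the smooth conic $C$, this fibre has one or two points, with a single point exactly when $T_d\,D$ is tangent to $C$. Hence $\pi$ is finite of degree $2$, and since $\Char k\ne 2$ this degree-$2$ cover is automatically separable, with every ramification index (necessarily equal to $2$) prime to $\Char k$; so the cover is tame and Riemann--Hurwitz applies with no wild term.

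Next I would count the ramification points of $\pi$. A point $(c,d)\in E$ is a ramification point iff $T_d\,D$ is tangent to $C$, i.e.\ iff the line $T_d\,D$ is a common tangent of $C$ and $D$; distinct ramification points give distinct common tangents, since such a line $\ell$ recovers $d$ (the unique point of $D$ at which $\ell=T_d\,D$) and then $c$ (the unique point of $C$ on $\ell$). So the ramification points of $\pi$ are in bijection with the common tangent lines of $C$ and $D$, and by the classical fact that two smooth nowhere-tangent conics (intersection type $(1,1,1,1)$) have exactly four common tangents — dual to their having four transverse intersection points — $\pi$ has exactly four ramification points, each of index $2$. Riemann--Hurwitz then gives $2g(E)-2=2\bigl(2g(D)-2\bigr)+4=-4+4=0$, so $g(E)=1$.

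Finally I would check irreducibility. Decompose $E$ into its connected components, each a smooth projective curve on which $\pi$ has some degree $d_i\ge 1$ with $\sum_i d_i=2$. If $E$ were disconnected it would have exactly two components, each mapping isomorphically onto $D\cong\PP^1$ and hence unramified over $D$, contradicting the four ramification points found above. So $E$ is connected, hence irreducible; being a smooth projective irreducible curve of genus $1$ with a rational point (take any ramification point), it is an elliptic curve.

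The step needing the most care is the ramification count, i.e.\ the input that two smooth nowhere-tangent conics have exactly four common tangents; I would justify this via projective duality, using that the intersection type of a pencil of conics is preserved under dualizing, so that $C^{*}$ and $D^{*}$ are again of type $(1,1,1,1)$ and meet in four distinct points. An alternative that sidesteps duality entirely is to parametrize $C$ and $D$ by $\PP^1$ and observe that the bilinear equation $H=0$ cuts out $E$ as a smooth curve of bidegree $(2,2)$ in $\PP^1\times\PP^1$; then $g(E)=1$ follows from adjunction, since $K_{\PP^1\times\PP^1}\otimes\mathcal O(2,2)$ restricts to $\mathcal O_E$, and connectedness follows from $H^1\bigl(\PP^1\times\PP^1,\mathcal O(-2,-2)\bigr)=0$ (Künneth) applied to the ideal sheaf sequence of $E$.
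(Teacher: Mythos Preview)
Your proof is correct, and it takes a genuinely different route from the paper's. The paper argues directly inside $C\times D\cong\PP^1\times\PP^1$: both projections have degree $2$, so $E$ has bidegree $(2,2)$; irreducibility follows because any splitting $E=E_1\cup E_2$ into curves of bidegrees $(a_i,b_i)$ would force $E_1\cdot E_2=a_1b_2+a_2b_1>0$, contradicting smoothness; and the genus then drops out of the adjunction formula. Your argument instead treats $E\to D$ as a double cover, identifies its ramification points with the common tangents of $C$ and $D$, counts four of them via duality, and concludes by Riemann--Hurwitz. Amusingly, the ``alternative'' you sketch at the end---bidegree $(2,2)$ plus adjunction, with connectedness from cohomology---is essentially the paper's own argument (the paper gets irreducibility from the intersection-number trick rather than from K\"unneth, but otherwise it is the same). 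Your main approach has the virtue of making explicit where the genus comes from (four branch points), at the cost of importing the duality statement that nowhere-tangent conics have exactly four common tangents; the paper's approach is more self-contained but hides the geometry in the adjunction formula. One small presentational point: you invoke Riemann--Hurwitz before establishing irreducibility, but since your irreducibility argument only uses the \emph{existence} of ramification and not the genus computation, simply swapping the order of those two paragraphs fixes this.
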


\begin{proof}
 Since $C$ and $D$ are not tangent, $E$ is a smooth projective (possibly reducible) curve by Proposition~\ref{prop: E singular cond}. On the other hand, $E\subseteq C\times D\cong \PP^1\times \PP^1$, and the two projection maps $E\to C$ and $E\to D$ are both finite of degree two. Hence $E$ is a smooth curve of bidegree $(2,2)$ in $\PP^1\times \PP^1$. If $E$ is reducible, say $E=E_1 \cup E_2$ with $E_i$ of bidegree $(a_i,b_i)$, then $E_1\cap E_2\ne \emptyset$ since $E_1\cdot E_2=a_1b_2+a_2b_1 > 0$, contradicting the smoothness of $E$. Hence $E$ is irreducible, and it follows from the adjunction formula that $E$ is of genus one.
\end{proof}

If $C$ and $D$ are tangent, we will find the defining equation of $E$ in $\PP^1\times \PP^1$ to understand its geometric structure. For this, we modify the approach in \cite{leopold}, replacing any complex analytic argument by algebraic one. The first step is to write the equations of $C$ and $D$ in a suitably chosen homogeneous coordinate system for $\PP^2$. We will use the following lemma, whose proof is straightforward and thus omitted.

\begin{lemma}\label{lem: reduce C}
    Let $C$ be a smooth conic in $\PP^2$ over $k$ with $\Char k\ne 2$. If the point $p=[0:0:1]\in C$, and the tangent of $C$ at $p$ is given by $y=0$, then the defining equation of $C$ is of the form \[
            C\colon x^2+txy+ay^2-byz=0 \]
    for some $t,a,b\in k$ with $b\ne 0$.
\end{lemma}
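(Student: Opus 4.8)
The plan is to prove Lemma~\ref{lem: reduce C} by writing out the general symmetric matrix for a conic passing through $p=[0:0:1]$ with prescribed tangent, and then reading off the constraints imposed by these two conditions. Let $C$ be defined by the quadratic form
\[
 \alpha x^2 + \beta y^2 + \gamma z^2 + 2\delta xy + 2\varepsilon xz + 2\zeta yz = 0
\]
for scalars $\alpha,\dots,\zeta\in k$. Substituting the point $[0:0:1]$ forces $\gamma=0$, so the equation loses its $z^2$ term. The tangent line to $C$ at $[0:0:1]$ is computed from the partial derivatives of the form at that point; I would check that this tangent is the line $\varepsilon x + \zeta y = 0$ (up to scalar). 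Requiring this to equal the line $y=0$ forces $\varepsilon=0$ while $\zeta\ne 0$. After rescaling the whole equation so that the coefficient of $x^2$ becomes $1$ (which is possible precisely because $C$ is smooth — if $\alpha$ were $0$, then with $\gamma=\varepsilon=0$ the form would be divisible by $y$, making $C$ singular), the equation becomes $x^2 + t\,xy + a\,y^2 - b\,yz = 0$ after relabelling $t=2\delta$, $a=\beta$, $b=-2\zeta$, and $b\ne 0$.

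The remaining point to verify carefully is that $b\ne 0$ is not merely a consequence of $\zeta\ne 0$ but is in fact equivalent to the smoothness of $C$ given the other normalizations: with the equation in the stated form, the defining matrix is
\[
 \begin{pmatrix} 1 & t/2 & 0 \\ t/2 & a & -b/2 \\ 0 & -b/2 & 0 \end{pmatrix},
\]
whose determinant is $-b^2/4$, so nondegeneracy is exactly the condition $b\ne 0$. Here the hypothesis $\Char k\ne 2$ is what lets me pass freely between the symmetric matrix and the quadratic form (dividing cross-term coefficients by $2$) and ensures $b^2/4$ makes sense.

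Since each step is a short direct computation — imposing two linear conditions on the six coefficients of a conic and one rescaling — there is no real obstacle here; the only thing to be mildly careful about is the bookkeeping of factors of $2$ in the passage between the form and its Gram matrix, and confirming that the tangent-line formula $\varepsilon x+\zeta y=0$ at $[0:0:1]$ is correct. This is why the authors declare the proof "straightforward and thus omitted."
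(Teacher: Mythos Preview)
Your argument is correct and is exactly the direct computation the authors have in mind when they say the proof is ``straightforward and thus omitted.'' The only approach here is to impose the point and tangent conditions on a general conic and rescale, which is precisely what you do; your additional remark that the determinant of the Gram matrix equals $-b^2/4$ (so smoothness is equivalent to $b\ne 0$) is a nice confirmation.
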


\begin{proposition}\label{prop: reduce C D and intersection type}
    Let $C$ and $D$ be two smooth conics in $\PP^2$ over $k$ with $\Char k\ne 2$ that are tangent to each other at a point $p$. Then there exists a suitable homogeneous coordinate system for $\PP^2$ such that $p=[0:0:1]$, and the defining equations of $C$ and $D$ are of the form
    \begin{align*}
        &C\colon F(x,y,z)=x^2+txy+ay^2-byz=0,\\
        &D\colon G(x,y,z)=x^2-yz=0
    \end{align*}
    for some $t,a,b\in k$ with $b\ne 0$. Moreover, if we write $\Delta=t^2-4a(1-b)$, then the intersection type of $C$ and $D$ is
    \begin{itemize}
    \item $(2,1,1) \iff b\ne 1$ and $\Delta\ne 0$;
    \item $(2,2) \iff b\ne 1$ and $\Delta=0$;
    \item $(3,1) \iff b=1$ and $t\ne 0$;
    \item $(4) \iff b=1$, $t=0$, and $a\ne 0$.
    \end{itemize}
\end{proposition}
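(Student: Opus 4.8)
The plan is to reduce to an explicit coordinate system and then compute $C\cap D$ directly by parametrizing $D$.

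First I would pick a projective transformation carrying the tangency point $p$ to $[0:0:1]$ and the common tangent $T_pC=T_pD$ to the line $\{y=0\}$. Then both conics pass through $[0:0:1]$ with tangent $\{y=0\}$, so Lemma~\ref{lem: reduce C} puts them in the forms
\[
 C\colon x^2+t_1xy+a_1y^2-b_1yz=0,\qquad D\colon x^2+t_2xy+a_2y^2-b_2yz=0
\]
with $b_1,b_2\neq0$. To further normalize $D$ I would apply transformations fixing the flag $[0:0:1]\in\{y=0\}$, which by the same Lemma preserve the displayed shape of \emph{both} equations: the shear $x\mapsto x-\tfrac{t_2}{2}y$ (this is where $\Char k\neq2$ enters) kills the $xy$-term of $D$, a substitution $z\mapsto z+\mu y$ with suitable $\mu$ kills the remaining $y^2$-term, and the rescaling $z\mapsto z/b_2$ turns $D$ into $x^2-yz$. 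Tracking the coefficient of $yz$ in $C$ shows it stays nonzero, so $C$ keeps the form $x^2+txy+ay^2-byz=0$ with $b\neq0$. Finally $C\neq D$ rules out $(t,a,b)=(0,0,1)$.

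Next I would parametrize the smooth conic $D=\{x^2-yz=0\}$ by $[u:v]\mapsto[uv:u^2:v^2]$, an isomorphism $\PP^1\xrightarrow{\ \sim\ }D$ sending $[0:1]$ to $p=[0:0:1]$. Substituting into $F$ gives
\[
 F(uv,u^2,v^2)=u^2v^2+tu^3v+au^4-bu^2v^2=u^2\bigl((1-b)v^2+tuv+au^2\bigr),
\]
so, writing $Q(u,v)=(1-b)v^2+tuv+au^2$, we have $F|_D=u^2Q$. Since $D$ is smooth, the intersection multiplicity of $C$ and $D$ at the point corresponding to $[u_0:v_0]$ is $\ord_{[u_0:v_0]}\bigl(u^2Q(u,v)\bigr)$; in particular the intersection points other than $p$ are exactly the zeros of $Q$ with their multiplicities, and by B\'ezout these plus the multiplicity at $p$ sum to $4$. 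The case analysis then runs as follows. If $b\neq1$ then $Q(0,1)=1-b\neq0$, so $p$ is a double point and the residual points come from $Q$: two distinct points (type $(2,1,1)$) when $Q$ has no repeated zero, one double point (type $(2,2)$) when it does, and $Q$ has a repeated zero precisely when $\Delta=t^2-4a(1-b)=0$. If $b=1$ then $Q=u(tv+au)$ and $F|_D=u^3(tv+au)$; for $t\neq0$ the factor $tv+au$ vanishes at the single point $[t:-a]\neq[0:1]$, giving type $(3,1)$, while for $t=0$ we get $F|_D=au^4$, which is type $(4)$ when $a\neq0$ and forces $D\subseteq C$, hence $C=D$, when $a=0$. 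These four mutually exclusive cases exhaust all admissible $(t,a,b)$, so each asserted equivalence follows.

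The computation is largely bookkeeping; the delicate points are (i) verifying that the transformations normalizing $D$ preserve the normal form of $C$, so the two normalizations are compatible, and (ii) checking in the case analysis that the residual zeros of $Q$ (and of its linear factor when $b=1$) are genuinely distinct from $p$, so that the multiplicities are read off correctly and account for the full B\'ezout number $4$.
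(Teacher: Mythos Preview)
Your proof is correct and follows essentially the same approach as the paper: normalize $D$ by a flag-preserving coordinate change (the paper writes this as a single substitution $[x:y:z]=[x'+\tfrac{t_2}{2}y':y':(-a_2+\tfrac{t_2^2}{4})y'+b_2z']$, which is exactly your three steps composed), then re-invoke the lemma for $C$, then parametrize $D$ and read the intersection multiplicities off the restricted polynomial. The only cosmetic difference is that you use the projective parametrization $[u:v]\mapsto[uv:u^2:v^2]$ and factor $F|_D=u^2Q$, whereas the paper works on the affine chart $z=1$ with parameter $x$ and computes the residual intersections separately by substituting $yz=x^2$ into $F$; your version is slightly tidier but the content is identical.
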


\begin{proof}
    Let $L$ be the common tangent of $C$ and $D$ at $p$. Choose a homogeneous coordinate system $[x':y':z']$ for $\PP^2$ such that $p=[0:0:1]$ and $L\colon y'=0$. By Lemma~\ref{lem: reduce C}, the defining equations of $C$ and $D$ are of the form
    \begin{align*}
        &C\colon F(x',y',z')=x'^2+t_1x'y'+a_1y'^2-b_1y'z'=0,\\
        &D\colon G(x',y',z')=x'^2+t_2x'y'+a_2y'^2-b_2y'z'=0
    \end{align*}
    for some $t_i,a_i,b_i\in k$ with $b_i\ne 0$. Then in the homogeneous coordinate system \[
     [x:y:z]=[x'+\frac{t_2}{2}y' : y' : (-a_2+\frac{t_2^2}{4})y'+b_2z'], \]
    the defining equation of $D$ becomes \[
           D\colon G(x,y,z)=x^2-yz=0. \]
    Note that in the coordinate system $[x:y:z]$, we still have $p=[0:0:1]$ and $L\colon y=0$, so by Lemma~\ref{lem: reduce C}, the defining equation of $C$ is of the form \[
        C\colon F(x,y,z)=x^2+txy+ay^2-byz=0 \]
    for some $t,a,b\in k$ with $b\ne 0$.

    Let
    \begin{align*}
        f(x,y)&=F(x,y,1)=x^2+txy+ay^2-by,\\
        g(x,y)&=G(x,y,1)=x^2-y.
    \end{align*}
    Since $D$ is smooth and $x$ is a parameter for $D$, \[
        I_p(C,D)=I_{(0,0)}(f,g)=\ord_{(0,0)}(f|_D)=\ord_{x=0}((1-b)x^2+tx^3+ax^4). \]
    So the intersection type of $C$ and $D$ is
    \begin{itemize}
        \item $(2,1,1)$ or $(2,2)\iff I_p(C,D)=2 \iff b\ne 1$;
        \item $(3,1)\iff I_p(C,D)=3 \iff b=1$ and $t\ne 0$;
        \item $(4)\iff I_p(C,D)=4 \iff b=1$, $t=0$, and $a\ne 0$.
    \end{itemize}
    If $I_p(C,D)=2$, the intersection points of $C$ and $D$ other than $p$ can be computed by plugging $yz=x^2$ into the equation of $C$, which gives \[
        (1-b)x^2+txy+ay^2=0.\]
    The discriminant of this quadratic equation is $\Delta=t^2-4a(1-b)$, so the intersection type of $C$ and $D$ is
    \begin{itemize}
     \item $(2,1,1)\iff b\ne 1$ and $\Delta\ne 0$;
     \item $(2,2) \iff b\ne 1$ and $\Delta=0$.
    \end{itemize}
\end{proof}

We can now give a complete description of the projective curve $E$ defined in Proposition~\ref{prop: E singular cond} when $C$ and $D$ are tangent.

\begin{proposition}\label{prop: shape of E}
  Let $C$ and $D$ be two smooth conics in $\PP^2$ over an algebraically closed field $k$ with $\Char k\ne 2$, and let \[
   E=\{(c,d)\in C \times D \mid c\in T_d\,D \}. \]
  If $C$ and $D$ are tangent, then $E$ is
 \begin{itemize}
   \item an irreducible rational curve with a node and smooth elsewhere if the intersection type of $C$ and $D$ is $(2,1,1)$;
   \item two smooth rational curves intersecting at two points transversally if the intersection type of $C$ and $D$ is $(2,2)$;
   \item an irreducible rational curve with an ordinary cusp and smooth elsewhere if the intersection type of $C$ and $D$ is $(3,1)$;
   \item two smooth rational curves intersecting at one point with multiplicity two if the intersection type of $C$ and $D$ is $(4)$.
 \end{itemize}
Moreover, in the two cases where $E$ is reducible, the involutions $\sigma$ and $\tau$ in Definition~\ref{def: auto on E} both interchange the two irreducible components of $E$.
\end{proposition}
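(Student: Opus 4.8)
The plan is to use Proposition~\ref{prop: reduce C D and intersection type} to put $C$ and $D$ in explicit normal form, write $E$ down as a single curve of bidegree $(2,2)$ in $\PP^1\times\PP^1$, and then read off its geometry from that equation, using Proposition~\ref{prop: E singular cond} to locate the singular points. So I would choose coordinates so that $C$ and $D$ are tangent at $p=[0:0:1]$ and
\[
 C\colon x^2+txy+ay^2-byz=0,\qquad D\colon x^2-yz=0,
\]
with $b\ne 0$. The maps $[u:v]\mapsto[uv:u^2:v^2]$ and $[\lambda:\mu]\mapsto[b\lambda\mu:b\mu^2:\lambda^2+t\lambda\mu+a\mu^2]$ are isomorphisms from $\PP^1$ onto $D$ and onto $C$, respectively; together they identify $C\times D$ with $\PP^1\times\PP^1$ and send $(p,p)$ to $([1:0],[0:1])$. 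Since $\Char k\ne 2$, a point $c=[x:y:z]$ lies on $T_dD=P_dD$, for $d=[x':y':z']$, exactly when $2xx'-yz'-zy'=0$; substituting the two parametrizations shows that $E$ is cut out in $\PP^1\times\PP^1$ by the bihomogeneous equation
\[
 Q:=u^2\lambda^2+(tu^2-2buv)\lambda\mu+(au^2+bv^2)\mu^2=0
\]
of bidegree $(2,2)$. In particular both projections $E\to C$ and $E\to D$ are finite of degree $2$.

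Next I would decide when $E$ is reducible. Viewing $Q$ as a quadratic in $\lambda,\mu$ with coefficients in $k[u,v]$, its discriminant is $u^2\bigl((t^2-4a)u^2-4btuv+4b(b-1)v^2\bigr)$, and the discriminant of the quadratic factor in parentheses works out to $16b\,\Delta$. As $b\ne 0$, this shows that $Q$ is a product of two forms of bidegree $(1,1)$ precisely when $\Delta=0$; moreover $Q$ has no nonconstant factor involving only $u,v$ or only $\lambda,\mu$, and $Q$ is not a perfect square when $\Delta\ne 0$, so $(1,1)+(1,1)$ is the only way $Q$ can fail to be irreducible, and in every case $E$ is reduced. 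Comparing with Proposition~\ref{prop: reduce C D and intersection type}, $E$ is reducible exactly in the intersection types $(2,2)$ and $(4)$.

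For the irreducible cases $(2,1,1)$ and $(3,1)$, $E$ is an irreducible curve of bidegree $(2,2)$, so $p_a(E)=1$ by adjunction on $\PP^1\times\PP^1$; by Proposition~\ref{prop: E singular cond}, together with the fact that $C$ and $D$ have exactly one tangent point in these types, $E$ has exactly one singular point, namely $(p,p)$, and since it is the only singular point its $\delta$-invariant equals $p_a(E)$ minus the genus of the normalization, hence lies between $1$ and $p_a(E)=1$. Thus $\delta=1$: the singularity is either a node or an ordinary cusp, and $E$ is rational. To decide which, I would pass to the affine chart $\lambda=1$, $v=1$ with local coordinates $\mu=m$, $u=w$ at $(p,p)$, in which $Q=w^2-2bwm+bm^2+tw^2m+aw^2m^2$; the quadratic part $w^2-2bwm+bm^2$ has discriminant $4b(b-1)$. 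In type $(2,1,1)$ we have $b\ne 1$, so this quadratic has two distinct linear factors and $(p,p)$ is a node; in type $(3,1)$ we have $b=1$, the quadratic part is $(w-m)^2$, so $(p,p)$ is not a node and is therefore, since $\delta=1$, an ordinary cusp. This establishes the first and third bullet points.

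For the reducible cases $(2,2)$ and $(4)$, write $E=E_1\cup E_2$ with each $E_i$ of bidegree $(1,1)$. No $E_i$ can contain a fibre $\{c_0\}\times\PP^1$, since that would force $c_0$ to lie on every tangent line of $D$, which happens for no point of $\PP^2$ when $D$ is a smooth conic and $\Char k\ne 2$; hence each $E_i$ is an irreducible curve of bidegree $(1,1)$, that is, the graph of an automorphism of $\PP^1$, hence a smooth rational curve. On $\PP^1\times\PP^1$ one has $E_1\cdot E_2=2$, while set-theoretically $E_1\cap E_2\subseteq\Sing(E)$, which by Proposition~\ref{prop: E singular cond} is the set of tangent points of $C$ and $D$: two points in type $(2,2)$ and one point in type $(4)$. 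Therefore in type $(2,2)$ the curves $E_1,E_2$ meet transversally at two points, and in type $(4)$ they meet at a single point with intersection multiplicity $2$, giving the remaining two bullet points. Finally, since each projection $E\to C$ and $E\to D$ restricts to an isomorphism on each $E_i$, over a general point of $C$ (resp.\ of $D$) the two points of $E$ lying above it are one in $E_1$ and one in $E_2$; because $\sigma$ (resp.\ $\tau$) exchanges these two points, it interchanges the components $E_1$ and $E_2$. The only delicate step is the local analysis at $(p,p)$ in the irreducible cases — in particular seeing that the cusp is ordinary and that the node/cusp dichotomy via $4b(b-1)$ remains valid in every characteristic other than $2$; everything else is bookkeeping on $\PP^1\times\PP^1$ together with the singularity count already supplied by Proposition~\ref{prop: E singular cond}.
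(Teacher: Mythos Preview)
Your proof is correct and follows essentially the same route as the paper: normalize $C$ and $D$ via Proposition~\ref{prop: reduce C D and intersection type}, write $E$ explicitly as a bidegree $(2,2)$ curve in $\PP^1\times\PP^1$, detect reducibility via the discriminant (which collapses exactly when $\Delta=0$), and in the irreducible cases use adjunction plus the singularity count from Proposition~\ref{prop: E singular cond} together with the quadratic part at the tangent point to distinguish node from cusp. Your use of the $\delta$-invariant to pin down the singularity type as $A_1$ or $A_2$, and your argument for $\sigma,\tau$ swapping components via the degree-one projections $E_i\to C$, $E_i\to D$, are slightly more explicit than the paper's ``one sees from $H(u,v)$'' and its factorization argument, but the underlying computations (the tangent-cone discriminant $4b(b-1)$, the intersection number $E_1\cdot E_2=2$) are identical.
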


\begin{proof}
By Proposition~\ref{prop: reduce C D and intersection type}, we can choose a homogeneous coordinate system for $\PP^2$ such that $C$ and $D$ are tangent at $[0:0:1]$, and the defining equations of $C$ and $D$ are of the form
    \begin{align*}
        &C\colon F(x,y,z)=x^2+txy+ay^2-byz=0,\\
        &D\colon G(x,y,z)=x^2-yz=0
    \end{align*}
for some $t,a,b\in k$ with $b\ne 0$. Parametrizing $C$ with the parameter $u=y/x$ gives an isomorphism \[
 \PP^1\xrightarrow{\ \cong\ } C,\quad u\in k\cup \{\infty\}\longmapsto [bu:bu^2:1+tu+au^2]\in C. \]
Similarly, parametrizing $D$ with the parameter $v=y/x$ gives an isomorphism \[
 \PP^1\xrightarrow{\ \cong\ } D,\quad v\in k\cup \{\infty\}\longmapsto [v:v^2:1]\in D. \]
Since the tangent of $D$ at a point $d=[v:v^2:1]\in D$ is given by \[
         T_d\,D\colon 2vx-y-v^2z=0, \]
we find that \[
        E=\{(c,d)\in C \times D \mid c\in T_d\,D \}
        \cong\{(u,v)\in\PP^1\times\PP^1\mid H(u,v)=0\}, \]
where  \[
   H(u,v)=bu^2-2buv+(1+tu+au^2)v^2. \]

We have
\begin{align*}
   \text{$E$ is reducible} &\iff \text{$H$ is reducible in $k[u,v]=k[u][v]$}\\
                &\iff \text{$H$ is reducible in $k(u)[v]$},
\end{align*}
where the second $\iff$ is due to Gauss's lemma. Since $H$ is a quadratic polynomial in $k(u)[v]$, it is reducible if and only if its discriminant \[
 (2bu)^2-4bu^2(1+tu+au^2)=-4bu^2(1-b+tu+au^2) \]
is a perfect square. Hence
\begin{align*}
  E\text{ is reducible} &\iff 1-b+tu+au^2 \text{ is a square in } k[u] \\
    &\iff \Delta=t^2-4a(1-b)=0.
\end{align*}
Thus $E$ is reducible if the intersection type of $C$ and $D$ is $(2,2)$ or $(4)$ by Proposition~\ref{prop: reduce C D and intersection type}. Moreover, in this case, the irreducible factorization of the polynomial $H$ is of the form $H(u,v)=H_1(u,v)\cdot H_2(u,v)$, where both $H_1$ and $H_2$ are polynomials of bidegree~$(1,1)$ in $(u,v)$. It follows that the curves $E_1$ and $E_2$ defined respectively by $H_1$ and $H_2$ are both smooth rational curves, and they are the irreducible components of $E$. If the involution $\sigma$ in Definition~\ref{def: auto on E} sends a point $(c,d)\in E$ to $(c,d')$, then $d$ and $d'$ are exactly the solutions to the quadratic equation in $v$ given by \[
    H(c,v)=H_1(c,v)\cdot H_2(c,v)=0. \]
So if $(c,d) \in E_1$, then $d$ is the solution to $H_1(c,v)=0$, and hence $d'$ is the solution to $H_2(c,v)=0$, that is, $(c,d') \in E_2$. Thus $\sigma$ interchanges $E_1$ and $E_2$. A similar argument shows that $\tau$ also interchanges $E_1$ and $E_2$.

If the intersection type of $C$ and $D$ is $(2,2)$, then $E_1$ and $E_2$ intersect at two points by Proposition~\ref{prop: E singular cond}, and the intersection is transversal since the intersection number $E_1\cdot E_2=2$. If the intersection type of $C$ and $D$ is $(4)$, then $E_1$ and $E_2$ intersect at only one point by Proposition~\ref{prop: E singular cond}, and the intersection multiplicity at that point is equal to the intersection number $E_1\cdot E_2=2$.

Suppose that the intersection type of $C$ and $D$ is $(2,1,1)$ or $(3,1)$. Then $E$ is an irreducible curve of bidegree~$(2,2)$ in $\PP^1\times \PP^1$, so it is of arithmetic genus one by the adjunction formula. By Proposition~\ref{prop: E singular cond}, $E$ has only one singularity, which occurs at $(u,v)=(0,0)$. The singularity type depends on whether the quadratic part $bu^2-2buv+v^2$ of the defining polynomial $H(u,v)$ of $E$ is a perfect square: it is a node if $bu^2-2buv+v^2$ is not a perfect square, whereas if $bu^2-2buv+v^2$ is a perfect square then it is an ordinary cusp. Hence the singularity is
\begin{itemize}
  \item a node if $b\ne 1$, that is, if the intersection type of $C$ and $D$ is $(2,1,1)$;
  \item an ordinary cusp if $b=1$, that is, if the intersection type of $C$ and $D$ is $(3,1)$.
\end{itemize}
In both cases $E$ has geometric genus zero, so $E$ is rational.
\end{proof}

\section{Poncelet's theorem for any conics}
If two plane conics $C$ and $D$ are tangent, then the Poncelet process that starts at a tangent point of $C$ and $D$ stops after one step, which is certainly not the case for all other starting points. So the following version of Poncelet's theorem for possibly tangent conics is the best one could hope for.

\begin{theorem}\label{thmm}
Let $C$ and $D$ be two smooth conics in $\PP^2$ over an algebraically closed field $k$ with $\Char k\ne 2$. Let $T\subseteq C\cap D$ be the set of points where $C$ and $D$ are tangent. If there exist a point $c_1\in C\setminus T$ and a positive integer $n$ such that the Poncelet process with initial point $c_1$ stops after $n$ steps, then the Poncelet process with any initial point in $C$ stops after $n$ steps.

Moreover, in the cases where the intersection type of $C$ and $D$ are $(3,1)$ or $(4)$,
    \begin{itemize}
        \item if $\Char k =0$, then the Poncelet process stops for no initial points in $C\setminus T$;
        \item if $\Char k >0$, then the Poncelet process stops for all initial points in $C$ after $\Char k$ steps.
    \end{itemize}
\end{theorem}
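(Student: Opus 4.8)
The plan is to recast the Poncelet process as iteration of the automorphism $\nu=\sigma\circ\tau$ of $E$ and then read everything off from the structure of $E$ given by Propositions~\ref{prop: E singular cond}, \ref{cor: general position is elliptic curve}, and \ref{prop: shape of E}. First I would record the reformulation already observed before Definition~\ref{def: auto on E}: every $c_1\in C$ admits some $d_1\in D$ with $L(c_1,d_1)$ tangent to $D$, i.e.\ with $(c_1,d_1)\in E$, and unwinding the definitions gives $(c_i,d_i)=\nu^{\,i-1}(c_1,d_1)$, so the process with initial point $c_1$ stops after $n$ steps exactly when $\nu^n(c_1,d_1)=(c_1,d_1)$. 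By Proposition~\ref{prop: E singular cond}, $(c_1,d_1)$ lies in the smooth locus $E^{\mathrm{sm}}:=E\setminus\operatorname{Sing}E$ precisely when $c_1\notin T$; and since the fixed points of $\nu$ are exactly the singular points of $E$ (Proposition~\ref{prop: singular condition for fixed point}), any $\nu$-orbit meeting $\operatorname{Sing}E$ consists of a single fixed point, so the orbit of a point of $E^{\mathrm{sm}}$ stays in $E^{\mathrm{sm}}$. Thus everything reduces to describing the fixed-point-free automorphism $\nu|_{E^{\mathrm{sm}}}$.

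Next I would go through the five intersection types. If $C,D$ are nowhere tangent, $E$ is an elliptic curve (Proposition~\ref{cor: general position is elliptic curve}); any fixed-point-free automorphism of an elliptic curve $E$ is a nontrivial translation $t_a$ (write it as $t_a\circ\varepsilon$ with $\varepsilon\in\operatorname{Aut}(E,O)$; if $\varepsilon\ne\operatorname{id}$ then $x\mapsto\varepsilon(x)-x$ is a nonzero isogeny, hence surjective, and solving $\varepsilon(x)-x=-a$ gives a fixed point), so $\nu|_{E^{\mathrm{sm}}}=t_a$ with $a\ne0$. In the tangent cases I would use Proposition~\ref{prop: shape of E}. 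For types $(2,1,1)$ and $(3,1)$, $E$ is an irreducible rational curve with a single node, resp.\ cusp, so $E^{\mathrm{sm}}\cong\mathbb{G}_m$, resp.\ $\mathbb{G}_a$, and $\nu$ lifts to an automorphism $\tilde\nu$ of the normalization $\PP^1$ fixing the preimage(s) of the singular point. In the nodal case $\tilde\nu$ permutes the two preimages, so after normalizing coordinates it is $z\mapsto\lambda z$ or $z\mapsto\lambda/z$; the latter has the two fixed points $\pm\sqrt\lambda\in\mathbb{G}_m$ (here $\Char k\ne2$ is used), which is impossible, so $\nu|_{E^{\mathrm{sm}}}$ is $z\mapsto\lambda z$ with $\lambda\ne1$. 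In the cuspidal case $\tilde\nu$ fixes the single preimage, hence is an affine map $z\mapsto\alpha z+\beta$ of $\A^1=E^{\mathrm{sm}}$, and fixed-point-freeness forces $\alpha=1$ and $\beta\ne0$, so $\nu|_{E^{\mathrm{sm}}}$ is the translation $z\mapsto z+\beta$. For types $(2,2)$ and $(4)$, $E=E_1\cup E_2$ with each $E_i\cong\PP^1$, and $\sigma$ and $\tau$ interchange $E_1,E_2$, so $\nu$ preserves each $E_i$; a tangent point $p$ gives the singular point $(p,p)$ with $\sigma(p,p)=\tau(p,p)=(p,p)$ (as $P_pD=T_pD$ meets $D$ only at $p$), so $\nu$ fixes the common point(s) of $E_1$ and $E_2$, whence (arguing as above with fixed-point-freeness on each $E_i$) $\nu|_{E_1}$ is $z\mapsto\lambda z$ in case $(2,2)$ and a nonzero translation $z\mapsto z+\beta$ in case $(4)$. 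Finally, since $\sigma^2=\tau^2=\operatorname{id}$ and both swap the components, $\nu|_{E_2}$ is conjugate through the isomorphism $\tau|_{E_1}\colon E_1\to E_2$ to $(\nu|_{E_1})^{-1}$, hence in suitable coordinates it is $z\mapsto\lambda^{\pm1}z$ in case $(2,2)$ and $z\mapsto z-\alpha\beta$ (some $\alpha\ne0$) in case $(4)$.

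Then I would finish uniformly. In every case $\nu|_{E^{\mathrm{sm}}}$ restricts, on each connected component, to multiplication by a unit $\lambda\ne1$ on $\mathbb{G}_m$, or to a nonzero translation of an elliptic curve or of $\mathbb{G}_a$; for such a map ``$\nu^n$ has a fixed point on that component'' is equivalent to ``$\nu^n=\operatorname{id}$ on that component'', the condition being $\lambda^n=1$ in the multiplicative case and $n$ times the translation vector being $0$ in the additive case — and these conditions agree on the two components in the reducible cases, since $\lambda^n=1\iff\lambda^{-n}=1$ and $n\beta=0\iff n(-\alpha\beta)=0$. Hence if $\nu^n$ fixes one point of $E^{\mathrm{sm}}$ then $\nu^n=\operatorname{id}$ on all of $E^{\mathrm{sm}}$, and $\nu^n$ also fixes every singular point; this gives the first assertion, since the process stopping after $n$ steps for some $c_1\in C\setminus T$ forces $\nu^n=\operatorname{id}$ and hence the process stops after $n$ steps for every initial point of $C$ (at a point of $T$ the process is constant). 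For the ``moreover'': in types $(3,1)$ and $(4)$ the relevant map is translation of $\mathbb{G}_a$ by a \emph{nonzero} $\beta\in k$, so $n\beta=0$ holds for some $n\ge1$ iff $\Char k>0$, in which case precisely when $\Char k\mid n$; therefore in characteristic $0$ the process never closes up for an initial point of $C\setminus T$, while in characteristic $p>0$ we get $\nu^p=\operatorname{id}$ on $E^{\mathrm{sm}}$, so the process closes up after $p$ steps for every initial point of $C$.

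The step I expect to be the main obstacle is the classification of $\nu|_{E^{\mathrm{sm}}}$ in the tangent cases — in particular, for the cuspidal type $(3,1)$, showing that $\tilde\nu$ is a \emph{translation} of $\A^1$ rather than a dilation, which is exactly the dichotomy responsible for the characteristic-dependent behaviour and uses that an ordinary cusp is unibranch (so $\tilde\nu$ fixes a single point of $\PP^1$) together with fixed-point-freeness forcing the affine map to be unipotent; and, in the reducible cases, getting the precise relation between $\nu|_{E_1}$ and $\nu|_{E_2}$ out of $\sigma^2=\tau^2=\operatorname{id}$ and the fact that $\sigma,\tau$ swap the components. The remaining ingredients — that every $c_1\in C$ has a starting $d_1$, that orbits of smooth points stay smooth, and the trivial behaviour at tangent points — are routine once this structural picture is in place.
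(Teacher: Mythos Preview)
Your proposal is correct and follows essentially the same route as the paper: reformulate closure as $\nu^n=\operatorname{id}$ on $E$, then analyze $\nu$ case by case according to the intersection type using Propositions~\ref{prop: E singular cond}--\ref{prop: shape of E} and Proposition~\ref{prop: singular condition for fixed point}. The only notable differences are local: in the elliptic case you deduce directly that the fixed-point-free $\nu$ is a translation (the paper instead shows that each involution $\sigma,\tau$, having a fixed point, is of the form $p\mapsto -p+a$ and then composes), and in the reducible cases you make the conjugacy $\nu|_{E_2}\sim(\nu|_{E_1})^{-1}$ via $\tau$ explicit where the paper carries out the equivalent direct computation.
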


\begin{proof}
Let $E=\{(c,d)\in C \times D \mid c\in T_d\,D \}$ be the projective curve defined in Proposition~\ref{prop: E singular cond}, where it is also shown that the singular locus $\Sing(E)$ of $E$ is \[
 \Sing(E)=\{(c,d)\in C\times D\mid c=d\in T\}. \]
Let $\sigma$ and $\tau$ be the involutions on $E$ in Definition~\ref{def: auto on E}, and let $\nu=\sigma \tau$. Then a sequence of points $(c_1,d_1),(c_2,d_2),\ldots\in C\times D$ is produced by the Poncelet process if and only if $(c_i,d_i)\in E$ and $\nu(c_i,d_i)=(c_{i+1},d_{i+1})$ for all $i\ge 1$. So what we want to show is equivalent to the statement that if there exists a positive integer~$n$ such that $\nu^n$ has a fixed point in $E\setminus \Sing(E)$, then $\nu^n$ is the identity map on $E$. Since the geometric structure of $E$ depends on the intersection type of $C$ and $D$, we divide the proof into several cases accordingly.

\textbf{The intersection type of $C$ and $D$ is $(1,1,1,1)$.} In this case, $E$ is an elliptic curve by Proposition~\ref{cor: general position is elliptic curve}. This is the case treated by Griffiths and Harris in \cite{GH}. They wrote their proof over $\CC$, but we will adopt their method and point out that it actually works in positive characteristic too. The key fact, which Griffiths and Harris only proved over $\CC$ but is actually true in general, is that if $E$ is an elliptic curve and $\alpha\colon E\to E$ is an involution with a fixed point, then there exists a point $a\in E$ such that $\alpha(p)=-p+a$ for all $p\in E$, where the addition here denotes the elliptic curve group law. Since $\sigma$ and $\tau$ are both involutions on $E$ with fixed points, there thus exist $a,b\in E$ such that $\sigma(p)=-p+a$ and $\tau(p)=-p+b$ for all $p\in E$. Hence \[
 \nu(p)=(\sigma \tau)(p)=-(-p+b)+a=p+(a-b), \]
so $\nu^n(p)=p+n(a-b)$ for all $p\in E$. If $\nu^n$ has a fixed point, then $n(a-b)=0$, and hence $\nu^n$ is the identity map.

\textbf{The intersection type of $C$ and $D$ is $(2,1,1)$.} In this case, $E$ is an irreducible rational curve with a node $q$ and smooth elsewhere by Proposition~\ref{prop: shape of E}. Let $\phi\colon \PP^1 \to E$ be the normalization of $E$. Then $\phi^{-1}(q)=\{p_1,p_2\}$ consists of two distinct points, and \[
\phi|_{U}\colon U=\PP^1\setminus \{p_1,p_2\} \longrightarrow E\setminus \{q\}=V \]
is an isomorphism. Let $\tilde{\nu}$ be the automorphism of $\PP^1$ induced by $\nu$. Since $q$ is the only fixed point of $\nu$ by Proposition \ref{prop: singular condition for fixed point}, $\tilde{\nu}$ has no fixed points in $U$. Since any automorphism of $\PP^1$ has a fixed point, $p_1$ and $p_2$ must both be fixed points of $\tilde{\nu}$. If there exists a positive integer~$n$ such that $\nu^n$ has a fixed point in $V$, then $\tilde{\nu}^n$ has a fixed point in $U$. Then $\tilde{\nu}^n$ is an automorphism of $\PP^1$ with at least three fixed points, so it is the identity map.

\textbf{The intersection type of $C$ and $D$ is $(3,1)$.} In this case, $E$ is an irreducible rational curve with an ordinary cusp $q$ and smooth elsewhere by Proposition~\ref{prop: shape of E}. Let $\phi\colon \PP^1 \to E$ be the normalization of $E$. Then $\phi^{-1}(q)=\{p\}$ consists of a single point, and \[
\phi|_{U}\colon U=\PP^1\setminus \{p\} \longrightarrow E\setminus \{q\}=V \]
is an isomorphism. Let $\tilde{\nu}$ be the automorphism of $\PP^1$ induced by $\nu$. Since $q$ is the only fixed point of $\nu$ by Proposition \ref{prop: singular condition for fixed point}, $\tilde{\nu}$ has no fixed points in $U$. Hence $p$ is the only fixed point of $\tilde{\nu}$. If we choose an affine coordinate $z$ on $\PP^1$ such that the point $p$ corresponds to $z=\infty$, then $\tilde{\nu}$ is given by $\tilde{\nu}(z)=z+b$ for some nonzero $b\in k$, so $\tilde{\nu}^n(z)=z+nb$ for all $z\in \PP^1$. If $\Char k =0$, then for any positive integer $n$ we have $nb\ne 0$, which implies that $\tilde{\nu}^n$ has no fixed points in $U$, and hence $\nu^n$ has no fixed points in $V$. On the other hand, if $\Char k >0$, then for $n=\Char k$ we have $nb=0$, so $\nu^n$ is the identity map.

\textbf{The intersection type of $C$ and $D$ is $(2,2)$.} In this case, by Proposition~\ref{prop: shape of E}, $E$ has two irreducible components $E_1$ and $E_2$, both isomorphic to $\PP^1$, which intersect transversally at two points $p_1$ and $p_2$. Since both $\sigma$ and $\tau$ interchange the two components, $\nu=\sigma\tau$ restricts to an automorphism on $E_i$ for $i=1,2$. By Proposition~\ref{prop: singular condition for fixed point}, $p_1$ and $p_2$ are fixed points of $\nu$. Suppose that there is a positive integer $n$ such that $\nu^n$ has a fixed point $q$ in $E\setminus \{p_1,p_2\}$. Without loss of generality, we may assume that $q\in E_1$. Then $\nu^n|_{E_1}$ is an automorphism of $E_1\cong \PP^1$ with at least three fixed points, so $\nu^n|_{E_1}$ is the identity map. Then $(\nu^n)^{-1}=(\tau\sigma)^n$ also restricts to the identity map on $E_1$, that is, $(\tau\sigma)^n(p)=p$ for all $p\in E_1$. It follows that, for any point $p\in E_2$, we have $(\tau\sigma)^n\bigl(\tau(p)\bigr)=\tau(p)$, and applying $\tau$ on both sides gives $(\sigma\tau)^n(p)=p$, that is, $\nu^n|_{E_2}$ is the identity map. Hence $\nu^n$ is the identity map on $E$.

\textbf{The intersection type of $C$ and $D$ is $(4)$.} In this case, by Proposition~\ref{prop: shape of E}, $E$ has two irreducible components $E_1$ and $E_2$, both isomorphic to $\PP^1$, which intersect at only one point $p$ with multiplicity two. Since both $\sigma$ and $\tau$ interchange the two components, $\nu=\sigma\tau$ restricts to an automorphism on $E_i$ for $i=1,2$. By Proposition~\ref{prop: singular condition for fixed point}, $p$ is the only fixed point of $\nu$ on $E$. For each $i=1,2$, let $z_i$ be an affine coordinate for $E_i \cong \PP^1$ such that the point $p$ corresponds to $z_i=\infty$. Then there exist nonzero $b_i\in k$ such that $\nu|_{E_i}(z_i)=z_i+b_i$, so $\nu^n|_{E_i}(z_i)=z_i+nb_i$ for all $z_i\in E_i$. If $\Char k =0$, then for any positive integer $n$ we have $nb_i\ne 0$, so $\nu^n$ has no fixed points in $E\setminus \{p\}$. On the other hand, if $\Char k >0$, then for $n=\Char k$ we have $nb_i=0$, so $\nu^n$ is the identity map.
\end{proof}

\appendix

\section{Quadrics in characteristic~$2$}
In this appendix, we present the structure of quadrics over an algebraically closed field~$k$ of characteristic~$2$ (see also \cite[Theorem~3.5, Corollary~3.6]{BM} where $k$ is not assumed to be algebraically closed). The purpose is to show that, in characteristic~$2$, one cannot define a meaningful Poncelet process for conics due to Corollary \ref{cor: tangent of conic in char 2 }.

\begin{proposition}
    Let $q\colon V\to k$ be a quadratic form on a  finite-dimensional $k$-vector space $V$. Then under a suitable coordinate system on $V$, $q$ is represented by a quadratic polynomial of the form
    \begin{align*}
        x_1y_1+x_2y_2+\dots+x_ly_l \ \text{ or }\  x_1y_1+x_2y_2+\dots+x_ly_l+x_{l+1}^2.
    \end{align*}
\end{proposition}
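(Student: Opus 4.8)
The plan is to pass to the symmetric bilinear form associated to $q$, namely $b(u,v) = q(u+v) - q(u) - q(v)$, which in characteristic $2$ is \emph{alternating}: $b(v,v) = q(2v) - 2q(v) = 2q(v) = 0$. First I would peel off the radical. Set $W = V^{\perp} = \{v \in V : b(v,w) = 0 \text{ for all } w \in V\}$ and choose any vector-space complement $U$, so that $V = U \oplus W$. One checks that $b|_U$ is nondegenerate (since $W = V^{\perp}$, any vector of $U$ that is $b$-orthogonal to $U$ is also $b$-orthogonal to $W$, hence to $U + W = V$, so it lies in $V^{\perp} \cap U = W \cap U = 0$), and that $q = q|_U \perp q|_W$ in the sense that $q(u+w) = q(u) + q(w)$ for $u \in U$, $w \in W$; this last identity holds because $b(u,w) = 0$ and $\Char k = 2$. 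So it is enough to bring $q|_U$ and $q|_W$ separately into normal form.

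For the nondegenerate piece $q|_U$, note that $\dim U$ is even since $U$ carries a nondegenerate alternating form, and I would induct on $\dim U$ to show $q|_U \cong x_1 y_1 + \dots + x_l y_l$. The inductive step amounts to finding a $2$-dimensional $b$-nondegenerate subspace $P = \langle e,f \rangle$ with $b(e,f) = 1$ and $q(e) = q(f) = 0$: once this is done, $U = P \oplus P^{\perp}$ with $b|_{P^{\perp}}$ again nondegenerate, $q|_U = q|_P \perp q|_{P^{\perp}}$ by the same $\Char k = 2$ computation, and $q|_P = xy$ in the basis $e,f$, so the induction proceeds. To build $P$: pick $v,w$ with $b(v,w) = 1$; on the plane they span one has $q(\alpha v + \beta w) = q(v)\alpha^2 + q(w)\beta^2 + \alpha\beta$. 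Choosing a root $t \in k$ of the quadratic equation $q(v)\,t^2 + t + q(w) = 0$ (which exists because $k$ is algebraically closed) and putting $e = tv + w$ gives $q(e) = 0$; a second short computation in this plane then produces $f = (1 + t\,q(v))v + q(v)\,w$ with $q(f) = 0$ and $b(e,f) = 1$.

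For the radical piece $q|_W$, the vanishing of $b$ on $W$ gives $q(u+v) = q(u) + q(v)$ for $u,v \in W$, while $q(\lambda v) = \lambda^2 q(v)$ for all $\lambda$; hence $\ker(q|_W) = \{v \in W : q(v) = 0\}$ is a linear subspace. If $q|_W \equiv 0$ we are already in the first case. Otherwise choose $v_0 \in W$ with $q(v_0) \neq 0$ and rescale so that $q(v_0) = 1$. For any $w \in W$, picking $\mu \in k$ with $\mu^2 = q(w)$ yields $q(w - \mu v_0) = q(w) + \mu^2 q(v_0) = 2q(w) = 0$, so $W = \ker(q|_W) \oplus k v_0$; a basis of $\ker(q|_W)$ together with $v_0$ then puts $q|_W$ in the form $x_{l+1}^2$. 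Combining the coordinates obtained on $U$ and on $W$ gives the two asserted normal forms.

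The step I expect to be the main obstacle is the construction of the hyperbolic planes inside the nondegenerate part, because this is the one place where the interaction of $\Char k = 2$ with the algebraic closedness of $k$ is genuinely needed: the cross term $\alpha\beta$ in $q(\alpha v + \beta w)$ cannot be absorbed by the odd-characteristic ``complete the square'' trick, so instead one exploits that every quadratic equation over $k$ has a root in order to make $q$ vanish on a $2$-dimensional subspace while keeping $b$ nondegenerate there. The remaining ingredients — splitting off the radical and the semilinearity of $q$ on the radical — are routine.
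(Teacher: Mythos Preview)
Your proof is correct, but it follows a different route from the paper's. Both start by observing that the polar form $b(u,v)=q(u+v)-q(u)-q(v)$ is alternating in characteristic $2$. The paper then invokes the symplectic basis theorem once to write $q=\sum_{i=1}^l(a_ix_i^2+x_iy_i+b_iy_i^2)+\sum_{i>l}a_ix_i^2$, and finishes by two short polynomial manipulations: each binary form $a_ix_i^2+x_iy_i+b_iy_i^2$ factors over the algebraically closed $k$ into two independent linear forms $\tilde{x}_i,\tilde{y}_i$, and on the radical one uses $\sum_{i>l}a_ix_i^2=(\sum_{i>l}\sqrt{a_i}\,x_i)^2$. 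Your argument instead splits off the radical first and then builds hyperbolic planes one at a time on the nondegenerate part by solving the quadratic $q(v)t^2+t+q(w)=0$ to locate isotropic vectors. The paper's route is shorter and leans on the structure of alternating forms as a black box; yours is more in the spirit of Witt decomposition, effectively reproving the symplectic normal form along the way and making explicit where the isotropic lines come from. Either way, algebraic closedness enters at the same essential points: factoring a binary quadratic (equivalently, solving a quadratic equation) on each $2$-plane, and extracting square roots on the radical.
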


\begin{proof}
    Let \[
    B_q\colon V\times V \longrightarrow k, \quad (u,v)\longmapsto q(u+v)-q(u)-q(v) \]
    be the bilinear form defined by $q$. Note that, if $\{v_1, \dots, v_n\}$ is a  basis for $V$, and if we write \[
      q(\sum^n_{i=1}x_iv_i)=\sum^n_{i=1}a_ix_i^2+\sum_{1\le i<j\le n}a_{ij}x_ix_j,\]
    then the coefficient of $x_ix_j$ is $B_q(v_i,v_j)$ because \[
            B_q(v_i,v_j)=q(v_i+v_j)-q(v_i)-q(v_j)=a_i+a_j+a_{ij}-a_i-a_j=a_{ij}. \]
    Since $B_q(v,v)=q(v+v)-q(v)-q(v)=2q(v)=0$, $B_q$ is an alternating bilinear form, so there exists a basis of $V$ with respect to which $B_q$ is represented by a block-diagonal matrix of the form \[
       \begin{pmatrix}
             0 & 1 \\
             1 & 0 & & & & & & \BigZero \\
             & & \ddots  \\
             & & &0 & 1\\
             & & &1 & 0\\
                       \\
             & & \BigZero & & & & & \BigZero & & \\
                       &
       \end{pmatrix}. \]
    Hence, under the coordinate system defined by this basis, $q$ is represented by a quadratic polynomial of the form \[
        q=\sum^l_{i=1}(a_ix_i^2+x_iy_i+b_iy_i^2)+\sum^m_{i=l+1}a_ix_i^2. \]
    Since $k$ is algebraically closed, one can factor \[
        a_ix_i^2+x_iy_i+b_iy_i^2=(a_i'x_i+b_i'y_i)(a_i''x_i+b_i'y_i) \]
    into two linear forms, which are necessarily linearly independent since their product contains the term $x_iy_i$. For each $i=1,\ldots,l$, let \[
     \tilde{x}_i=a_i'x_i+b_i'y_i,\quad \tilde{y}_i=a_i''x_i+b_i''y_i.\]
    Then \[
    q=  \begin{cases}
            \ \displaystyle\sum_{i=1}^l\tilde{x}_i\tilde{y}_i, &\text{if $a_i=0$ for all }i>l; \\
            \ \displaystyle\sum_{i=1}^l\tilde{x}_i\tilde{y}_i+\tilde{x}_{l+1}^2, & \text{if } \displaystyle\tilde{x}_{l+1}=\sum^m_{i=l+1}\sqrt {a_i}x_i\ne 0.
        \end{cases}\]
\end{proof}

\begin{corollary}\label{cor: irred curve in P2 in char 2}
    Let $C\subseteq \PP^2$ be an irreducible quadric. Then there exists a homogeneous coordinate system $[x_0:x_1:x_2]$ for $\PP^2$ such that the defining equation of $C$ is $x_0x_1+x_2^2=0$.
\end{corollary}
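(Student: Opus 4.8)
The plan is to deduce this directly from the preceding Proposition on the normal forms of quadratic forms in characteristic two. First I would let $q$ be the quadratic form on the three-dimensional $k$-vector space $V$ whose projectivized zero locus is $C$, and invoke that Proposition to put $q$, in a suitable basis of $V$, into one of the shapes $x_1y_1+\dots+x_ly_l$ or $x_1y_1+\dots+x_ly_l+x_{l+1}^2$. Since $\dim V=3$, the first shape requires $2l\le 3$ and the second requires $2l+1\le 3$, so in both cases $l\le 1$; running through these, the only possibilities for $q$ (up to renaming basis vectors) are $0$, $\ x_1^2$, $\ x_1y_1$, and $\ x_1y_1+x_2^2$.

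Next I would discard the three reducible candidates. The zero polynomial defines no quadric; the form $x_1^2$ defines a non-reduced double line; and $x_1y_1$ defines the union of the two distinct lines $x_1=0$ and $y_1=0$. None of these is an irreducible quadric, so $q=x_1y_1+x_2^2$ is the only surviving case, and it already uses all three coordinates. Relabelling $(x_1,y_1,x_2)$ as $(x_0,x_1,x_2)$ then exhibits a homogeneous coordinate system in which $C$ is cut out by $x_0x_1+x_2^2=0$.

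Finally I would check that this last case is not vacuous, i.e.\ that $x_0x_1+x_2^2=0$ really is irreducible. The cleanest way, in keeping with the rest of the paper, is to verify smoothness via the Jacobian criterion: in characteristic two the partial derivatives of $x_0x_1+x_2^2$ are $x_1$, $x_0$, and $2x_2=0$, whose common zero locus in $\PP^2$ is the single point $[0:0:1]$, and this point does not lie on $C$; hence $C$ is smooth, and in particular irreducible (a reducible plane conic is singular at the meeting point of its two line components, and a double line is non-reduced). There is no genuine obstacle here; the only thing requiring a little care is the characteristic-two bookkeeping of which normal forms actually fit into three variables, together with remembering to throw out the double line $x_1^2=0$ even though one is sometimes tempted to call it a conic.
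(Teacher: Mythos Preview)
Your proposal is correct and is exactly the intended deduction: the paper states this corollary immediately after the Proposition on normal forms and gives no proof, treating it as an immediate consequence. Your argument spells out precisely that consequence---enumerating the normal forms compatible with $\dim V=3$ and discarding the reducible ones---so there is nothing to add or change.
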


\begin{corollary}\label{cor: tangent of conic in char 2 }
    Let $C\subseteq \PP^2$ be an irreducible quadric.
    \begin{enumerate}
        \item There exists a unique point $p\in \PP^2$ such that every line through $p$ is tangent to $C$, and every line tangent to $C$ passes through $p$.
        \item For any point $q\in \PP^2$ distinct from $p$, there is only one line through $q$ tangent to $C$.
    \end{enumerate}
\end{corollary}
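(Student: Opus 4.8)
The plan is to use the normal form from Corollary~\ref{cor: irred curve in P2 in char 2}: choose homogeneous coordinates so that $C=\{x_0x_1+x_2^2=0\}$, and then compute all tangent lines explicitly. Since $\Char k=2$, the partial derivatives of $F=x_0x_1+x_2^2$ are $x_1$, $x_0$, and $2x_2=0$; these vanish simultaneously only at $[0:0:1]$, which does not lie on $C$, so $C$ is smooth and its tangent line at $[a_0:a_1:a_2]\in C$ is $a_1x_0+a_0x_1=0$. The key observation is that every such line passes through $p:=[0:0:1]$. Conversely, a line through $p$ has the form $\lambda x_0+\mu x_1=0$ (no $x_2$-term), and substituting into $F=0$ while using the characteristic-two identity $(u+v)^2=u^2+v^2$ and the bijectivity of the Frobenius on $k$, one sees that $F$ restricted to this line is a perfect square; hence the line meets $C$ at a single point with multiplicity two, so it is tangent. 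Dually, a line with nonzero $x_2$-coefficient restricts to a binary quadratic form with nonzero linear term, which has two distinct roots in characteristic two, so it is a genuine secant. Thus the tangent lines of $C$ are precisely the lines through $p$.

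This already gives the existence half of part~(1). For uniqueness: if a point $p'$ had the same property, then every line through $p'$ would be tangent to $C$, hence pass through $p$ by what was just proved; but when $p'\neq p$ there are lines through $p'$ avoiding $p$, a contradiction, so $p'=p$. Part~(2) then follows at once: for $q\neq p$, a tangent line through $q$ must also pass through $p$, so it must be $L(p,q)$, and conversely $L(p,q)$ is a line through $p$ and hence tangent; so there is exactly one tangent line through $q$.

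There is no real obstacle here — this is essentially the classical fact that a smooth conic in characteristic two is a strange curve with strange point $p$. The only computation requiring care is the substitution argument distinguishing lines through $p$ from the remaining lines, which rests entirely on $(u+v)^2=u^2+v^2$ and the surjectivity of the Frobenius on the algebraically closed field $k$: these together force the relevant restricted quadratic to become a perfect square exactly for the lines through $p$, and to have distinct roots otherwise.
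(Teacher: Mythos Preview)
Your proof is correct and follows essentially the same approach as the paper: reduce to the normal form $x_0x_1+x_2^2=0$, compute the tangent line at each point via the partial derivatives $(x_1,x_0,0)$, and observe that the tangent lines are exactly the lines through $p=[0:0:1]$. You supply more detail than the paper on the converse direction (the paper simply notes that every line $\lambda x_0+\mu x_1=0$ arises as $a_1x_0+a_0x_1=0$ for some $[a_0:a_1:a_2]\in C$, which implicitly uses the same Frobenius surjectivity you invoke) and on the uniqueness of $p$, but the underlying argument is the same.
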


\begin{proof}
    By Corollary \ref{cor: irred curve in P2 in char 2}, we may assume that the defining polynomial of $C$ is \[
     f(x_0,x_1,x_2)=x_0x_1+x_2^2. \]
    Since \[
        (\dfrac{\partial f}{\partial x_0},\dfrac{\partial f}{\partial x_1},\dfrac{\partial f}{\partial x_2})=(x_1,x_0,0), \]
    the tangent line of $C$ at a point $[a_0:a_1:a_2]\in C$ is \[
            a_1x_0+a_0x_1=0. \]
    These lines all pass through the point $p=[0:0:1]$, and conversely, every line through $p$ can be obtained in this way. Thus the first statement is proved.

    Let $q\in\PP^2$ be a point distinct from $p$, and let $L$ be a line through $q$ tangent to $C$. Then $L$ passes through $p$ by the first statement. Hence $L$ is the unique line passing through $p$ and $q$, so the second statement is proved.
\end{proof}

\end{document}